\documentclass[12pt,reqno]{amsart}

\usepackage[T1]{fontenc}
\usepackage[utf8]{inputenc} 

\usepackage{amsmath,amssymb,amsthm}
\usepackage{mathtools}
\usepackage{enumitem}
\usepackage{array}
\usepackage{multicol}
\usepackage{verbatim}
\usepackage{tikz}
\usepackage{float}
\usetikzlibrary{shapes,backgrounds,calc}
\usepackage{graphics}
\usepackage{accents}
\usepackage{hyperref}
\usepackage{setspace}
\usepackage{pstricks}
\usepackage{geometry}

\hypersetup{
  colorlinks=true,
  linkcolor=blue,
  citecolor=blue,
  urlcolor=blue
}

\theoremstyle{plain}
\newtheorem{cor}{Corollary}
\newtheorem{lem}{Lemma}

\newtheorem{theorem}{Theorem}
\newtheorem*{thm}{Theorem}

\theoremstyle{definition}
\newtheorem{defn}{Definition}

\newtheorem{example}{Example}
\newtheorem{rem}{Remark}
\newtheoremstyle{block}
  {\topsep}   
  {\topsep}   
  {\normalfont}  
  {}          
  {\bfseries} 
  {.}         
  {\newline}  
  {}          

\theoremstyle{block}

\theoremstyle{remark}

\numberwithin{equation}{section}

\newcommand{\R}{{\mathbb {R}}}
\begin{document}

\title{The P-Vertex Problem for Graphs with Perfect Matchings}

   \author{G. Arunkumar}
	
	\address{Indian Institute of Technology Madras, Chennai, India.}
	\email{garunkumar@iitm.ac.in}

	\author{U. S. Jerisha$^{\ast}$}
   \thanks{}
	\address{Indian Institute of Technology Madras, Chennai, India.}
	\email{ma24d001@smail.iitm.ac.in}

\subjclass[2020]{05C50, 05C76, 05C38, 15A18.}

\begin{abstract}
Sharma and Panda recently proved that every bipartite graph with a perfect matching has property ($P$); that is, it admits a non-singular real symmetric matrix with support graph $G$ for which every vertex is a $P$-vertex. In this paper, we extend their result from bipartite graphs to arbitrary graphs. To this end, we introduce the notion of a $P$-vertex covering and define the $P$-vertex covering number $p(G)$ as the minimum number of non-singular matrices in $S(G)$ needed so that every vertex of $G$ is a $P$-vertex of at least one of them. Given a maximal matching of $G$, we partition the vertex set into the vertices saturated by the matching and the remaining vertices, which necessarily form an independent set. We then construct separate matrices covering these two classes of vertices. We use the Implicit Function Theorem as a perturbation tool to establish the desired result.

\end{abstract}

\maketitle

\medskip


\section{Introduction}

The relationship between the combinatorial structure of a graph and the
spectral properties of associated matrices is a central topic in spectral
graph theory. One aspect of this relationship concerns the effect of
vertex deletion on eigenvalue multiplicities.

Let \(A=(a_{ij})\in\mathbb{R}^{n\times n}\) be a real symmetric matrix.
The \emph{support graph} of \(A\), denoted by \(G(A)\), is the graph with
vertex set \([n]=\{1,\ldots,n\}\) and edge set
\[
E(G(A))
=
\bigl\{\{i,j\}:i\neq j \text{ and }a_{ij}\neq0\bigr\}.
\]
Thus, the diagonal entries of \(A\) do not affect its support graph. For
a graph \(G\) on \(n\) vertices, define
\[
S(G)
=
\bigl\{
A\in\mathbb{R}^{n\times n}:
A^{T}=A \text{ and }G(A)=G
\bigr\}.
\]

For \(i\in[n]\), let \(A(i)\) denote the principal submatrix obtained
from \(A\) by deleting its \(i^{\text{th}}\) row and column. If \(\lambda\) is an
eigenvalue of \(A\), let \(m_A(\lambda)\) denote its algebraic
multiplicity. By the Cauchy interlacing theorem,
\[
m_{A(i)}(\lambda)-m_A(\lambda)\in\{-1,0,1\}.
\]
A vertex \(i\) is called a \emph{\(P\)-vertex of \(A\)} if
\[
m_{A(i)}(0)-m_A(0)=1.
\]
Equivalently, deleting the row and column corresponding to \(i\)
increases the nullity by exactly one.

The study of such vertices originates in the work of Parter
\cite{Parter}, who investigated changes in eigenvalue multiplicities
under vertex deletion for symmetric matrices whose underlying graphs
are trees. Wiener \cite{wiener} subsequently extended this line of
research to sign-symmetric acyclic matrices. Related ideas were also
studied by Godsil in connection with matching polynomials
\cite{godsil}. Johnson and Sutton \cite{hermitian} developed a general
framework for classifying vertices according to the change in
eigenvalue multiplicity caused by their deletion. The notions of
\(P\)-vertices and \(P\)-sets were later studied systematically by Kim
and Shader \cite{kim1,kim2}.

Much of the subsequent work focused on acyclic matrices and the
maximum number of \(P\)-vertices that such matrices can possess.
Several extremal and structural questions for trees were investigated
in \cite{fons1,fons2,fons5,du2}. The study was later extended beyond
trees. Andeli\'c, da Fonseca, and Mamede \cite{fons1} showed, in
particular, that every cycle has the full \(P\)-vertex property.
Howlader, Raickwade, and Sivakumar \cite{Howlader} subsequently studied
the corresponding problem for unicyclic graphs.

A graph \(G\) on \(n\) vertices is said to have \emph{property
\((P)\)} if there exists a non-singular matrix \(A\in S(G)\) for which
every vertex of \(G\) is a \(P\)-vertex. Since \(A\) is non-singular,
the cofactor identity gives
\[
(A^{-1})_{ii}
=
\frac{\det A(i)}{\det A}.
\]
Consequently, every vertex of \(G\) is a \(P\)-vertex of \(A\) if and
only if
\(
\operatorname{diag}(A^{-1})=\mathbf 0.
\)
Thus, property \((P)\) may equivalently be formulated as the existence
of a non-singular matrix in \(S(G)\) whose inverse is hollow.

Recently, Sharma and Panda \cite{sharma} developed a graph-theoretic
framework for studying property \((P)\), with particular emphasis on
graphs admitting perfect matchings. One of their principal results
states that every connected bipartite graph with a perfect matching
has property \((P)\). Since both property \((P)\) and the existence of
a perfect matching are determined componentwise, their result extends
immediately to arbitrary bipartite graphs. They further showed that a
tree has property \((P)\) if and only if it has a perfect matching,
which is necessarily unique, and established an analogous
characterization for bipartite unicyclic graphs. They also investigated
certain non-bipartite graphs with unique perfect matchings and the
behavior of property \((P)\) under graph operations.

The requirement in property \((P)\) that a single matrix make every
vertex a \(P\)-vertex is quite restrictive. This motivates a covering
version of the problem. We say that a graph \(G\) admits a
\emph{\(P\)-vertex covering} if there exists a finite collection
\[
\mathcal A=\{A_1,\ldots,A_k\}\subseteq S(G)
\]
of non-singular matrices such that every vertex of \(G\) is a
\(P\)-vertex of at least one matrix in \(\mathcal A\). The minimum
cardinality of such a collection is called the
\emph{\(P\)-vertex covering number} of \(G\) and is denoted by \(p(G)\).
If no such collection exists, we define
\(
p(G)=\infty.
\)

\noindent With this terminology,
\[
p(G)=1
\quad\Longleftrightarrow\quad
G\text{ has property }(P).
\]

The main objective of this paper is to prove the following theorem in which we determine which graphs admit a
\(P\)-vertex covering and to obtain a universal bound for their
covering numbers. 

\begin{thm}
 Every graph without isolated vertices either has property
\((P)\) or has \(P\)-vertex covering number exactly \(2\).  
\end{thm}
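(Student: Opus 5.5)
The statement is equivalent to $p(G)\le 2$ for every graph $G$ without isolated vertices. Indeed, $p(G)\ge 1$ always holds, and $p(G)=1$ exactly when $G$ has property $(P)$. So if $G$ fails property $(P)$, then $p(G)\ge 2$, and the upper bound forces $p(G)=2$. The plan is therefore to exhibit two non-singular matrices $A,B\in S(G)$ whose sets of $P$-vertices together cover $V(G)$.

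Fix a maximal matching $M$ of $G$, let $V(M)$ be the set of saturated vertices, and let $I=V(G)\setminus V(M)$. By maximality, $I$ is independent. Since $G$ has no isolated vertices, every $v\in I$ has a neighbour, and that neighbour lies in $V(M)$. Throughout I use the reformulation from the introduction: for non-singular $A$, vertex $i$ is a $P$-vertex iff $(A^{-1})_{ii}=0$. I also use the fact that diagonal entries are free parameters in $S(G)$. The basic perturbation formula is
\[
\frac{\partial (A^{-1})_{ii}}{\partial a_{jj}}=-\bigl((A^{-1})_{ij}\bigr)^2 .
\]

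\textbf{The matrix $A$ covering $V(M)$.} Start with $A_0$ built as follows. Each matching edge $\{u,u'\}$ gets the block $\left(\begin{smallmatrix}0&1\\1&0\end{smallmatrix}\right)$, each vertex of $I$ gets diagonal entry $1$, and all other entries are $0$. Then $A_0$ is non-singular. On $V(M)$, $A_0^{-1}$ agrees with $A_0$, so $(A_0^{-1})_{ii}=0$ for all $i\in V(M)$. Next, let $E$ be the symmetric matrix with entry $1$ on every edge of $G$ not in $M$. Set $A(\varepsilon,d)=A_0+\varepsilon E+\operatorname{diag}(d)$, with $d$ supported on $V(M)$, and define $F(\varepsilon,d)=\bigl((A^{-1})_{ii}\bigr)_{i\in V(M)}$. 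At $(0,0)$ the Jacobian $\partial F/\partial d$ has entries $-((A_0^{-1})_{ij})^2$, which is minus the permutation matrix of the involution $u\leftrightarrow u'$, hence invertible. The Implicit Function Theorem gives $d(\varepsilon)$ with $F(\varepsilon,d(\varepsilon))=0$ for all small $\varepsilon$. For small $\varepsilon\ne0$, the matrix $A(\varepsilon,d(\varepsilon))$ lies in $S(G)$, since all edges of $G$ are present, and it remains non-singular. Every vertex of $V(M)$ is a $P$-vertex of it.

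\textbf{The matrix $B$ covering $I$.} I follow the same scheme: find a spanning subgraph $H\subseteq G$ and a non-singular $B_0$ with $G(B_0)=H$ such that (i) $(B_0^{-1})_{vv}=0$ for all $v\in I$, and (ii) the $|I|\times|I|$ matrix $\bigl(((B_0^{-1})_{vw})^2\bigr)_{v,w\in I}$ is invertible. Then the same IFT perturbation, adding $\varepsilon$ times the missing edges of $G$ and correcting the diagonal entries at $I$, yields $B$.

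To construct $B_0$, choose for each $v\in I$ a neighbour $u(v)\in V(M)$. Take $H$ to be the disjoint union, over the edges $\{u,u'\}\in M$, of the double star formed by $uu'$ together with the pendant $I$-vertices attached to $u$ or $u'$. Then $B_0$ is a direct sum of matrices on these double stars, and conditions (i)–(ii) decouple over components. For a double star $T$ with leaf sets $L$ at $u$ and $L'$ at $u'$, I would try to choose weights on $T$ so that each leaf becomes a $P$-vertex, for instance by making $B_0(v)$ singular for each leaf $v$ via suitable diagonal values.

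\textbf{Main obstacle.} The hard step is condition (ii): the leaves of one double star may interact, so the diagonal-only Jacobian need not be invertible. If it fails, I would first enlarge the parameter set to include the pendant edge weights and the diagonal entries at $u,u'$, and then only require that the full Jacobian have rank $|I|$. Failing that, I would verify invertibility generically by computing $T$'s inverse explicitly via Schur complements.
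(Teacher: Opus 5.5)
Your reduction to $p(G)\le 2$ is right. Your matrix covering $V(M)$ is exactly the paper's Step~1, rephrased via $(A^{-1})_{ii}=\det A(i)/\det A$ and $\partial (A^{-1})_{ii}/\partial a_{jj}=-((A^{-1})_{ij})^2$; in both versions the Jacobian is, up to sign, the permutation matrix of the matching involution.

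The gap is the second matrix, the one covering $I$. It carries the real content of the theorem, and you never construct it. Moreover, the scheme you do commit to cannot work as stated. Suppose only the diagonal entries at $I$ are corrected, and look at the row of $\bigl(((B_0^{-1})_{vw})^2\bigr)_{v,w\in I}$ indexed by $v$:
\begin{itemize}
\item its diagonal entry is $((B_0^{-1})_{vv})^2=0$ by (i);
\item its entries for $w$ in other components of $H$ vanish, because $B_0$ is a direct sum.
\end{itemize}
Hence every double star containing exactly one vertex of $I$ yields a zero row. In particular (i) and (ii) contradict each other whenever $|I|=1$. This is not an accident of the double-star choice: $\det B(v)$ does not involve $b_{vv}$ at all, so the diagonal entry at $v$ can never be the parameter that makes $v$ a $P$-vertex.

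What is missing is a choice of implicit-function variables adapted to each component, together with explicit blocks whose inverses you can read off. The paper's construction is as follows.
\begin{itemize}
\item It leaves the matching edges out of the unperturbed matrix; they are added back with weight $\epsilon$.
\item For each chosen neighbour $w_i$, with its $n_i$ assigned unsaturated vertices, it uses the star block $S_i=\begin{bmatrix} n_i-1&\mathbf 1^{T}\\ \mathbf 1&I_{n_i}\end{bmatrix}$.
\item It puts a $1$ on every other saturated vertex.
\end{itemize}
Then $S_i^{-1}=\begin{bmatrix}-1&\mathbf 1^{T}\\ \mathbf 1&I_{n_i}-\mathbf J_{n_i}\end{bmatrix}$. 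So every leaf is a $P$-vertex, distinct leaves of the same star have inverse entry $-1$, and each leaf has inverse entry $1$ with the centre.
\begin{itemize}
\item For $n_i\ge 2$ the paper uses the leaf diagonals as variables. The Jacobian block is then a nonzero multiple of $\mathbf J_{n_i}-I_{n_i}$, which is invertible.
\item For $n_i=1$ it uses the diagonal entry of the centre $w_i$ instead, which gives a nonzero $1\times 1$ block.
\end{itemize}
Your fallback of adding the diagonal entries at $u,u'$ as parameters points in this direction. But until you exhibit weights on each component and verify that the Jacobian has rank $|I|$, the matrix covering $I$, and hence $p(G)\le 2$, is not established.
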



\textit{Proposition~7.8 of \cite{fons1} claims that the central vertex of a 2-generalized star \(T\) cannot be a \(P\)-vertex for any non-singular matrix in \(S(T)\). We have observed that this result is false. Indeed, the graph considered
there is a tree with no perfect matching. Since a tree possesses
property~\((P)\) if and only if it has a perfect matching, no
non-singular matrix in \(S(G)\) can make every vertex of this graph a
\(P\)-vertex. The error in the proof lies in the implicit assumption
that the chosen non-singular matrix makes every vertex a \(P\)-vertex. Hence this result does not violate our preceding result.}

The proof of our theorem is based on a maximal matching of the graph. Let \(M\) be a maximal matching, let \(W\) be the set of vertices saturated by \(M\) and let \(U=V(G)\setminus W.\)
The maximality of \(M\) implies that \(U\) is independent. We first
construct a non-singular matrix in \(S(G)\) for which every vertex in
\(W\) is a \(P\)-vertex. The initial matrix is block diagonal, with one
block
\(
\begin{bmatrix}
0&1\\
1&0
\end{bmatrix}
\)
for each edge of \(M\) and a \(1\times1\) block \(\begin{bmatrix}
    1
\end{bmatrix}\) for each vertices in \(U\). The remaining edges are introduced through a
perturbation, while the diagonal entries corresponding to \(W\) are adjusted using the
Implicit Function Theorem so that the required principal minors remain
zero.

To cover the vertices in \(U\), we assign each vertex of \(U\) to one
of its neighbours in \(W\). This produces a collection of stars, for
which appropriate non-singular block matrices are constructed. A second
application of the Implicit Function Theorem then yields a matrix in
\(S(G)\) for which every vertex of \(U\) is a \(P\)-vertex.

As an immediate consequence of the first construction, we extend the
main perfect-matching result of Sharma and Panda from bipartite graphs
to arbitrary graphs. Namely, we prove that
\[
G\text{ has a perfect matching}
\quad\Longrightarrow\quad
G\text{ has property }(P)
\]
for every graph \(G\), without any bipartiteness assumption.

The paper is organized as follows. We first introduce the notions of \(P\)-vertex covering and the \(P\)-vertex covering number. We then show that any graph containing an isolated vertex cannot have finite covering number. Using maximal matchings and the Implicit Function Theorem, we prove that every graph without isolated vertices has covering number at most two. We next specialize this construction to graphs admitting a perfect matching and establish that every such graph has property $(P)$. Finally, we present examples illustrating how to explicitly construct matrices whose \(P\)-vertices collectively cover all the vertices of a graph $G$.

\section{The P vertex covering number of a graph G}

Throughout, we assume that every graph on \(n\) vertices has vertex set
\([n]=\{1,\ldots,n\}\).

\begin{defn}
Let \(A=(a_{ij})\) be an \(n\times n\) real symmetric matrix. The
\emph{support graph} of \(A\), denoted by \(G(A)\), is the graph with
vertex set \([n]\) and edge set
\[
E(G(A))
=
\bigl\{\{i,j\}: i\neq j \text{ and } a_{ij}\neq 0\bigr\}.
\]
\end{defn}

Given a graph \(G\) on \(n\) vertices, define
\[
S(G)
=
\bigl\{A\in\mathbb{R}^{n\times n}: A^{T}=A
\text{ and } G(A)=G\bigr\}.
\]

For an \(n\times n\) matrix \(A\) and \(i\in[n]\), let \(A(i)\) denote
the principal submatrix obtained by deleting the \(i^\text{th}\) row and the
\(i^\text{th}\) column of \(A\).

By the Cauchy interlacing theorem, for every \(i\in[n]\) and every
\(\lambda\in\mathbb{R}\),
\[
m_{A(i)}(\lambda)-m_A(\lambda)\in\{-1,0,1\},
\]
where \(m_A(\lambda)\) denotes the algebraic multiplicity of \(\lambda\)
as an eigenvalue of \(A\).

\begin{defn}
Let \(A\in S(G)\). A vertex \(i\in V(G)\) is called a
\emph{\(P\)-vertex of \(A\)} if
\[
m_{A(i)}(0)-m_A(0)=1.
\]
\end{defn}

\begin{defn}
A graph \(G\) is said to have the \emph{full \(P\)-vertex property}, or
\emph{property \((P)\)}, if there exists a matrix \(A\in S(G)\) for
which every vertex of \(G\) is a \(P\)-vertex.
\end{defn}

\begin{rem}
A singular real symmetric matrix of order \(n\) can have at most \(n-1\) \(P\)-vertices. Indeed, if \(A\) is singular with rank \(r\), then \(A\) has a non-singular principal submatrix of order \(r\) since \(A\) is symmetric. Choosing a vertex outside this principal submatrix, the corresponding principal submatrix \(A(i)\) has the same rank \(r\). Hence,
\[
m_{A(i)}(0)=(n-1)-r=m_A(0)-1,
\]
so \(i\) is not a \(P\)-vertex. Consequently, any matrix witnessing property \((P)\)
must be non-singular. We shall therefore restrict our attention to
non-singular matrices.
\end{rem}


\begin{defn}
A graph \(G\) is said to admit a \emph{\(P\)-vertex covering} if, for
every vertex \(i\in V(G)\), there exists a non-singular matrix
\(A_i\in S(G)\) such that \(i\) is a \(P\)-vertex of \(A_i\).
\end{defn}

\begin{defn}
Let \(G\) be a graph. The \emph{\(P\)-vertex covering number} of \(G\),
denoted by \(p(G)\), is defined by
\[
p(G)
=
\min\left\{
|\mathcal{A}|:
\begin{array}{l}
\mathcal{A}\subseteq S(G),\ \text{every matrix in }\mathcal{A}
\text{ is non-singular, and}\\[2mm]
\text{every vertex of }G\text{ is a \(P\)-vertex of some }
A\in\mathcal{A}
\end{array}
\right\},
\]
provided that such a family \(\mathcal{A}\) exists. If no such family
exists, we set \(p(G)=\infty\).
\end{defn}

\textbf{Note:}
If \(G\) admits a \(P\)-vertex covering, then
\(
p(G)\leq |V(G)|.
\)
Moreover, if \(G\) has property \((P)\), then \(p(G)=1\).

\begin{lem}\label{isolated}
If a graph \(G\) has an isolated vertex, then \(p(G)=\infty\).
\end{lem}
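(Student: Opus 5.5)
The plan is to show directly that an isolated vertex \(v\) can never be a \(P\)-vertex of a non-singular matrix in \(S(G)\). Then no family \(\mathcal A\) as in the definition of \(p(G)\) exists, and so \(p(G)=\infty\).

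Fix a non-singular \(A\in S(G)\) and relabel the vertices so that \(v=1\). Since \(v\) has no neighbours in \(G=G(A)\), every off-diagonal entry in row and column \(1\) is zero. Hence \(A\) is the direct sum
\[
A=\begin{bmatrix} a_{11} & 0\\ 0 & A(1)\end{bmatrix},
\]
and \(\det A=a_{11}\det A(1)\). Non-singularity of \(A\) then forces both \(a_{11}\neq 0\) and \(\det A(1)\neq 0\). Therefore \(m_A(0)=0\) and \(m_{A(1)}(0)=0\), so \(m_{A(1)}(0)-m_A(0)=0\neq 1\), and vertex \(1\) is not a \(P\)-vertex of \(A\).

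The same conclusion can be read off the cofactor identity from the introduction: \((A^{-1})_{11}=\det A(1)/\det A = 1/a_{11}\neq 0\). Either way, we obtain that \(v\) fails to be a \(P\)-vertex for every non-singular matrix in \(S(G)\).

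By the remark above, singular matrices are excluded from the definition of a covering in any case. So every candidate family \(\mathcal A\) of non-singular matrices misses \(v\), no admissible family exists, and \(p(G)=\infty\) by definition. I do not expect a real obstacle here. The only point requiring care is the observation that the diagonal entry \(a_{11}\) is unconstrained by the support graph yet still has to be nonzero, and that follows from the block-diagonal determinant factorization.
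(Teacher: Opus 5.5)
Your proof is correct and follows essentially the same route as the paper: you split off the isolated vertex as a \(1\times 1\) direct summand and compare the nullities of \(A\) and \(A(v)\). The paper treats every \(A\in S(G)\), getting a nullity change of \(0\) when \(a_{vv}\neq 0\) and \(-1\) when \(a_{vv}=0\). You instead restrict at once to non-singular \(A\), which forces \(a_{vv}\neq 0\); this suffices because the definition of \(p(G)\) only admits non-singular matrices.
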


\begin{proof}
Let \(i\) be an isolated vertex of \(G\) and let \(A\in S(G)\). Since
\(i\) is isolated, every off-diagonal entry in the \(i^\text{th}\) row and
column of \(A\) is zero. After applying the permutation which permutes $i$ to $1$ to the
rows and the same permutation to the columns, we may write
\[
A=
\begin{bmatrix}
a_{ii} & \mathbf{0}^{T}\\
\mathbf{0} & B
\end{bmatrix},
\]
where \(B\in S(G-i)\). 

Moreover, \(A(i)\) is permutation-similar to
\(B\) and hence
\(
m_{A(i)}(0)=m_B(0).
\)

If \(a_{ii}\neq 0\), then
\(
m_A(0)=m_B(0),
\) whereas if \(a_{ii}=0\), then
\(
m_A(0)=m_B(0)+1.
\)

Therefore,
\[
m_{A(i)}(0)-m_A(0)
=
\begin{cases}
0, & a_{ii}\neq 0,\\
-1, & a_{ii}=0.
\end{cases}
\]
In either case, \(i\) is not a \(P\)-vertex of \(A\).

Thus, no matrix in \(S(G)\) can have \(i\) as a \(P\)-vertex.
Consequently, \(G\) does not admit a \(P\)-vertex covering and hence
\(
p(G)=\infty.
\)

\end{proof}
The next theorem completely characterizes the graphs admitting a \(P\)-vertex covering. Combined with Lemma~\ref{isolated}, it shows that a graph admits a $P$-vertex covering if and only if it has no isolated vertices.


\begin{theorem}\label{thm:main}
Let \(G\) be a graph with no isolated vertices. Then
\(
p(G)\leq 2.
\)
\end{theorem}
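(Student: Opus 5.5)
We follow the strategy outlined in the introduction. Fix a maximal matching $M$ of $G$, let $W$ be the set of $M$-saturated vertices and $U=V(G)\setminus W$. Maximality makes $U$ independent. Since $G$ has no isolated vertices, every $u\in U$ has a neighbour, and that neighbour lies in $W$. We construct two non-singular matrices $A,B\in S(G)$: every vertex of $W$ will be a $P$-vertex of $A$, and every vertex of $U$ a $P$-vertex of $B$. For non-singular $A$, vertex $i$ is a $P$-vertex exactly when $\det A(i)=0$, so in both cases we must prescribe the vanishing of certain principal minors while keeping $\det\neq0$ and forcing the off-diagonal support to equal $E(G)$.

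\textbf{Matrix $A$.} Start from the block-diagonal matrix $A_0$ with a block $\begin{bmatrix}0&1\\1&0\end{bmatrix}$ for each edge of $M$ and a block $[1]$ for each $u\in U$. Then $A_0$ is non-singular and $\det A_0(w)=0$ for each $w\in W$, because deleting $w$ leaves its partner as a zero row. For the remaining edges $e\in E(G)\setminus M$, insert entries $t\,x_e$ with a small parameter $t$ and fixed nonzero $x_e$. Let the diagonal entries $d_w$, $w\in W$, be the unknowns. Consider $F(d,t)=\bigl(\det A(d,t)(w)\bigr)_{w\in W}$. We need the Jacobian $\partial F/\partial d$ at $(0,0)$ to be invertible. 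For $w$ matched to $w'$, $\det A_0(w)$ with $d_{w'}$ added equals $\pm d_{w'}$ times a nonzero constant, while the other diagonal perturbations contribute only higher order. So the Jacobian is, up to nonzero scalars, the permutation matrix of the matching involution $w\leftrightarrow w'$, which is invertible. The Implicit Function Theorem then gives smooth $d(t)$ with $d(0)=0$ and $F(d(t),t)=0$. For small $t\neq0$ the off-diagonal support is exactly $E(G)$, $\det A\neq0$ by continuity, and every $w\in W$ is a $P$-vertex. This also proves the perfect-matching case, where $U=\emptyset$.

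\textbf{Matrix $B$.} Assign each $u\in U$ to a chosen neighbour $\phi(u)\in W$. For $w\in W$ let $S_w=\{w\}\cup\phi^{-1}(w)$. We build a base matrix $B_0$ that is block diagonal with respect to a partition of $V(G)$ refining the stars $S_w$ together with the $M$-edges, is non-singular, and has $\det B_0(u)=0$ for all $u\in U$. The cleanest choice I would try first: for each $w$ with $k=|\phi^{-1}(w)|\ge1$, keep the $M$-edge $ww'$ and use the block on $\{w',w\}\cup\phi^{-1}(w)$ given by the star/path pattern $w'-w-u_i$. The diagonal is $0$ at each $u_i$, a free value at $w$, and a suitable value at $w'$. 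We need this block non-singular while deleting any single $u_i$ kills the determinant. A convenient way is to choose the block so that its inverse has zero diagonal at the $u_i$. For $k\ge2$ this may force including extra edges of $G$ inside the block, or using a non-matching pairing. Blocks for $M$-edges whose endpoints receive no $U$-vertex can be taken as $\begin{bmatrix}1&1\\1&2\end{bmatrix}$, and they impose no conditions.

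Then add the remaining edges with parameter $t$ and solve $\det B(u)=0$, $u\in U$, for suitable diagonal unknowns (one per $u$, e.g.\ $d_{\phi(u)}$-type entries or a designated entry in the block of $u$), again by the Implicit Function Theorem. The main obstacle is this step: we need a block for each star $S_w$ that is non-singular, has $\det B_0(u)=0$ for each leaf $u$, and comes with $|\phi^{-1}(w)|$ independent diagonal parameters giving an invertible Jacobian. Since a star with $k\ge2$ leaves and all leaf diagonals zero is singular, I would instead make leaf diagonals the free parameters. I would aim for $(B_0^{-1})_{u_iu_i}=0$ with $\partial (B^{-1})_{u_iu_i}/\partial d_{u_j}=-\bigl((B_0^{-1})_{u_iu_j}\bigr)^2$. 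Invertibility of this matrix then reduces to a nondegeneracy condition on the entries of $B_0^{-1}$ restricted to the leaves, which I would verify by an explicit choice using the matched partner $w'$ to supply the needed rank.
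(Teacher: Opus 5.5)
Your first matrix is essentially the paper's Step~1 and is correct. The gap is the second matrix. You never exhibit the base matrix $B_0$, and you yourself call this ``the main obstacle''. It is the only ingredient needed beyond the perfect-matching case, so as written the proposal only shows that the vertices of $W$ can be covered.

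The directions you sketch also have concrete defects:
\begin{itemize}
\item A block on $\{w',w\}\cup\phi^{-1}(w)$ built around the $M$-edge $ww'$ does not give a partition when $w'$ is itself the chosen neighbour of some $U$-vertex, since $w'$ would then lie in two blocks.
\item In the pattern $w'-w-u_i$ with zero leaf diagonals, every leaf row is a multiple of the same unit vector. So the block is singular for $k\ge 2$ whatever you put at $w'$.
\item If the leaf diagonals are your free parameters, your Jacobian $\bigl(-(B_0^{-1})_{u_iu_j}^2\bigr)$ has zero diagonal, precisely because you impose $(B_0^{-1})_{u_iu_i}=0$. For a star with one leaf this is the $1\times1$ zero matrix (the leaf's own diagonal entry does not even occur in $B(u)$), so the Implicit Function Theorem cannot be applied there.
\end{itemize}

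The missing idea is that the matching plays no role in the second matrix. For each $w_i$ with assigned leaves $U_i$, $n_i=|U_i|$, use the star block $S_i=\begin{bmatrix} n_i-1&\mathbf 1^{T}\\ \mathbf 1& I_{n_i}\end{bmatrix}$. Put a $1\times1$ block $[1]$ at every other vertex of $W$, and send every remaining edge of $G$, including the $M$-edges, into the $\epsilon$-perturbation.

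Then $\det S_i=-1$, $\det S_i(u)=0$ for each leaf, and $\det S_i(u,v)=1$ for distinct leaves. Equivalently, $S_i^{-1}$ has zero leaf diagonal and all leaf--leaf off-diagonal entries equal to $-1$. This is exactly the nondegeneracy you were after, with no need for $w'$.

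Choose the parameters as follows:
\begin{itemize}
\item When $n_i\ge2$, take the leaf diagonals. The Jacobian block is $\pm(\mathbf J_{n_i}-I_{n_i})$, which is invertible since its eigenvalues are $n_i-1$ and $-1$.
\item When $n_i=1$, take the centre diagonal $d_{w_i}$. The derivative is $\det B_0(u,w_i)=\pm1\neq0$.
\end{itemize}
Entries between different stars vanish: deleting a leaf already makes its own block singular (in your inverse formulation, $B_0^{-1}$ is block diagonal). So the full Jacobian is block diagonal and non-singular, and the rest of your argument goes through unchanged.
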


\begin{proof}
Let \(M\) be a maximal matching of \(G\). Let \(W\) be the set of
vertices saturated by \(M\) and let
\(
U=V(G)\setminus W.
\)
Since \(M\) is maximal, \(U\) is an independent set. Indeed, if two
vertices of \(U\) were adjacent, then their joining edge could be added
to \(M\), contradicting the maximality of \(M\).

We construct a non-singular matrix in \(S(G)\) for which every vertex
of \(W\) is a \(P\)-vertex and when \(U\neq\varnothing\), a second
non-singular matrix in \(S(G)\) for which every vertex of \(U\) is a
\(P\)-vertex.

Throughout the proof, for distinct vertices \(i\) and \(j\), we write
\(A(i,j)\) for the principal submatrix obtained by deleting the rows
and columns indexed by \(i\) and \(j\).

\medskip

\noindent
\textbf{Step 1: A matrix for the vertices in \(W\).}

Suppose
\(
M=\bigl\{\{x_1,y_1\},\ldots,\{x_t,y_t\}\bigr\}.
\)
Thus,
\(
W=\{x_1,y_1,\ldots,x_t,y_t\}.
\)
Order the vertices so that the vertices in \(W\) occur first, with the
endpoints of each matching edge consecutive, followed by the vertices
in \(U\).

Let
\(
B=
\begin{bmatrix}
0&1\\
1&0
\end{bmatrix}
\)
and define the block diagonal matrix
\(
A_W^0=
\operatorname{diag}
\bigl(
\underbrace{B,\ldots,B}_{t\text{ copies}},
I_{|U|}
\bigr).
\)
Since \(\det B=-1\), the matrix \(A_W^0\) is non-singular.

For each \(v\in W\), let \(d_v\) be a variable corresponding to the
\(v^\text{th}\) diagonal entry, and write
\[
\mathbf d=(d_v)_{v\in W}\in\mathbb R^{|W|}.
\]

Let \(\epsilon\in\mathbb R\). Define \(A_W(\mathbf d,\epsilon)\) to be
the symmetric matrix whose entries are given as follows:

\[
\bigl(A_W(\mathbf d,\epsilon)\bigr)_{vv}
=
\begin{cases}
d_v, & v\in W,\\
1,   & v\in U.
\end{cases}
\]
and for distinct vertices \(v,z\),
\[
\bigl(A_W(\mathbf d,\epsilon)\bigr)_{vz}
=
\begin{cases}
1,
& \{v,z\}\in M,\\
\epsilon,
& \{v,z\}\in E(G)\setminus M,\\
0,
& \{v,z\}\notin E(G).
\end{cases}
\]

Let \(\mathbf d^0=\mathbf 0\). Then
\(
A_W(\mathbf d^0,0)=A_W^0.
\)
Define
\(
F_W:\mathbb R^{|W|}\times\mathbb R
\longrightarrow
\mathbb R^{|W|}
\)
by
\(
F_W(\mathbf d,\epsilon)
=
\bigl(
\det A_W(\mathbf d,\epsilon)(v)
\bigr)_{v\in W}.
\)
Since the entries of \(A_W(\mathbf d,\epsilon)\) depend polynomially
on \((\mathbf d,\epsilon)\), the map \(F_W\) is continuously
differentiable.

For every \(v\in W\), deleting the row and column indexed by \(v\)
from \(A_W^0\) leaves a \(1\times1\) zero block arising from the
matching edge containing \(v\). Therefore,
\(
F_W(\mathbf d^0,0)=\mathbf 0.
\)

We now compute the Jacobian of \(F_W\) with respect to \(\mathbf d\) at the point \((\mathbf{d}^0,0)\).

For \(v,z\in W\),
\[
\left.
\frac{\partial}{\partial d_z}
\det A_W(\mathbf d,\epsilon)(v)
\right|_{(\mathbf d^0,0)}
=
\begin{cases}
0, & v=z,\\[1mm]
\det A_W^0(v,z), & v\neq z.
\end{cases}
\]
The first equality holds because the entry \(d_v\) does not occur in
\(A_W(\mathbf d,\epsilon)(v)\). 
The second equality follows from the calculation that if \(v\neq z\), then \(d_z\) appears only as the \((z,z)\)-entry of
\(A_W(\mathbf{d},\epsilon)(v)\). In the cofactor expansion of
\(\det\bigl(A_W(\mathbf{d},\epsilon)(v)\bigr)\) along the row corresponding to \(z\), the term
corresponding to the \((z,z)\)-entry contributes
\(
d_z\cdot\det\bigl(A_W(\mathbf{d},\epsilon)(v,z)\bigr),
\)
whereas all the other terms involve entries of that row outside the
\(z^{\text{th}}\) column and hence do not contain \(d_z\).

If \(v\) and \(z\) are the endpoints of the same edge of \(M\), then
deleting \(v\) and \(z\) removes one entire copy of \(B\). Hence,
\[
\det A_W^0(v,z)=(-1)^{t-1}\neq0.
\]

If \(v\) and \(z\) belong to two different matching edges, then
\(A_W^0(v,z)\) contains two \(1\times1\) zero diagonal blocks arising from two different matching edges containing $v$ and $z$  and
therefore
\(
\det A_W^0(v,z)=0.
\)

Consequently, with respect to the ordering
\(
x_1,y_1,x_2,y_2,\ldots,x_t,y_t,
\)
the Jacobian is a block diagonal matrix given by,
\[
D_{\mathbf d}F_W(\mathbf d^0,0)
=
\operatorname{diag}
\left(
\underbrace{
\begin{bmatrix}
0&c_W\\
c_W&0
\end{bmatrix},
\ldots,
\begin{bmatrix}
0&c_W\\
c_W&0
\end{bmatrix}
}_{t\text{ copies}}
\right),
\]
where
\(
c_W=(-1)^{t-1}.
\)
Every block is non-singular and hence
\(
D_{\mathbf d}F_W(\mathbf d^0,0)
\)
is non-singular.

By the Implicit Function Theorem, there exist \(\delta_W>0\) and a unique
continuously differentiable function
\(
\varphi_W:(-\delta_W,\delta_W)\longrightarrow\mathbb R^{|W|}
\)
such that
\(
\varphi_W(0)=\mathbf d^0
\)
and
\(
F_W(\varphi_W(\epsilon),\epsilon)=\mathbf 0
\)
for every \(\epsilon\in(-\delta_W,\delta_W)\).
Since
\(
\det A_W(\mathbf d^0,0)=\det A_W^0\neq0,
\)
continuity of the determinant allows us, after decreasing \(\delta_W\)
if necessary, to assume that
\(
\det A_W(\varphi_W(\epsilon),\epsilon)\neq0
\)
for every \(\epsilon\in(-\delta_W,\delta_W)\).

Choose
\(
\epsilon_W\in(-\delta_W,\delta_W)\setminus\{0\}
\)
and set
\(
\mathcal A_W
=
A_W(\varphi_W(\epsilon_W),\epsilon_W).
\)
Every edge of \(M\) has matrix entry \(1\), every edge of
\(E(G)\setminus M\) has matrix entry \(\epsilon_W\neq0\), and every
non-edge has matrix entry zero. Hence,
\(
\mathcal A_W\in S(G).
\)
Moreover,
\(
\det\mathcal A_W\neq0
\)
and, for every \(v\in W\),
\(
\det\mathcal A_W(v)=0.
\)

Thus,
\(
m_{\mathcal A_W}(0)=0
\)
and
\(
m_{\mathcal A_W(v)}(0)\geq1.
\)
By the Cauchy interlacing theorem,
\[
m_{\mathcal A_W(v)}(0)-m_{\mathcal A_W}(0)\leq1.
\]
It follows that
\(
m_{\mathcal A_W(v)}(0)=1
\)
and therefore every \(v\in W\) is a \(P\)-vertex of
\(\mathcal A_W\).

If \(U=\varnothing\), then \(W=V(G)\), so \(\mathcal A_W\) already
shows that \(p(G)=1\). We may therefore assume that
\(U\neq\varnothing\).

\medskip

\noindent
\textbf{Step 2: A matrix for the vertices in \(U\).}

Since \(G\) has no isolated vertices and \(U\) is independent, every
vertex of \(U\) has a neighbour in \(W\). For each \(u\in U\), choose
one such neighbour and denote it by \(f(u)\).

For \(w\in W\), define
\(
U_w=\{u\in U:f(u)=w\}.
\)
Let
\(
W'=\{w\in W:U_w\neq\varnothing\}
=
\{w_1,\ldots,w_q\}.
\)
For each \(1\leq i\leq q\), write
\(
U_i=U_{w_i}
=
\{u_{i,1},\ldots,u_{i,n_i}\},
\
n_i=|U_i|\geq1.
\)
The sets \(U_1,\ldots,U_q\) form a partition of \(U\).

For each \(i\), define the \((n_i+1)\times(n_i+1)\) matrix
\[
S_i=
\begin{bmatrix}
n_i-1&\mathbf 1_{n_i}^{T}\\
\mathbf 1_{n_i}&I_{n_i}
\end{bmatrix},
\]
where the first row and column correspond to \(w_i\), and the
remaining rows and columns correspond to the vertices in \(U_i\).

By the Schur complement formula,
\[
\det S_i
=
\det(I_{n_i})
\left(
n_i-1-\mathbf 1_{n_i}^{T}\mathbf 1_{n_i}
\right)
=(n_i-1)-n_i
=-1.
\]
Thus every \(S_i\) is non-singular.

If \(u\in U_i\), then deleting the row and column corresponding to 
\(u\) from the matrix \(S_i\) gives,
\[
\det S_i(u)
=
(n_i-1)-(n_i-1)
=0.
\]

If \(n_i\geq2\) and \(u,v\in U_i\) are distinct, then
\[
\det S_i(u,v)
=
(n_i-1)-(n_i-2)
=1.
\]

With respect to the ordering
\(
w_1,U_1,w_2,U_2,\ldots,w_q,U_q,W\setminus W',
\)
define
\[
A_U^0
=
\operatorname{diag}
\left(
S_1,\ldots,S_q,I_{|W\setminus W'|}
\right).
\]

Since every diagonal block is non-singular,
\(
\det A_U^0\neq0.
\)
Moreover, for every \(u\in U\),
\(
\det A_U^0(u)=0.
\)

We now choose the diagonal variables used in the Implicit Function
Theorem. Define a map
\(
\psi:U\longrightarrow V(G)
\)
as follows. If \(u\in U_i\), set
\(
\psi(u)=
\begin{cases}
w_i, & n_i=1,\\
u,   & n_i\geq2
\end{cases}
\).
The map \(\psi\) is injective. 

Thus, we may assign one independent
diagonal variable \(h_u\) to the diagonal entry indexed by
\(\psi(u)\), for each \(u\in U\). Write
\(
\mathbf h=(h_u)_{u\in U}\in\mathbb R^{|U|}.
\)
Let \(\mathbf h^0\) denote the values of these diagonal entries in
\(A_U^0\). Explicitly,
\[
h_u^0=
\begin{cases}
0, & u\in U_i\text{ and }n_i=1,\\
1, & u\in U_i\text{ and }n_i\geq2
\end{cases}.
\]

Let
\(
E_0=\bigl\{\{w_i,u\}:1\leq i\leq q,\ u\in U_i\bigr\}.
\)
Define \(A_U(\mathbf h,\epsilon)\) as follows. Its diagonal entries
indexed by \(\psi(u)\), \(u\in U\), are replaced by the corresponding
variables \(h_u\), while all other diagonal entries remain equal to
their values in \(A_U^0\). For distinct vertices \(v,z\), define
\[
\bigl(A_U(\mathbf h,\epsilon)\bigr)_{vz}
=
\begin{cases}
1,
& \{v,z\}\in E_0,\\
\epsilon,
& \{v,z\}\in E(G)\setminus E_0,\\
0,
& \{v,z\}\notin E(G).
\end{cases}
\]

Then
\(
A_U(\mathbf h^0,0)=A_U^0.
\)
Define
\(
F_U:\mathbb R^{|U|}\times\mathbb R
\longrightarrow
\mathbb R^{|U|}
\)
by
\[
F_U(\mathbf h,\epsilon)
=
\bigl(
\det A_U(\mathbf h,\epsilon)(u)
\bigr)_{u\in U}
\]
This is a continuously differentiable map and
\(
F_U(\mathbf h^0,0)=\mathbf 0.
\)

We compute the Jacobian of \(F_U\) with respect to \(\mathbf h\).
For \(u,v\in U\),
\[
\left.
\frac{\partial}{\partial h_v}
\det A_U(\mathbf h,\epsilon)(u)
\right|_{(\mathbf h^0,0)}
=
\begin{cases}
0,
& \psi(v)=u,\\[1mm]
\det A_U^0(u,\psi(v)),
& \psi(v)\neq u.
\end{cases}
\]

Let
\(
c_U=(-1)^{q-1}.
\)
We examine the Jacobian block corresponding to each set \(U_i\).

Suppose first that \(n_i=1\), and write \(U_i=\{u\}\). Then
\(
\psi(u)=w_i,
\)
and therefore
\[
\left.
\frac{\partial}{\partial h_u}
\det A_U(\mathbf h,\epsilon)(u)
\right|_{(\mathbf h^0,0)}
=
\det A_U^0(u,w_i).
\]

Deleting both \(u\) and \(w_i\) removes the entire block \(S_i\).
All the remaining blocks are non-singular and hence
\[
\det A_U^0(u,w_i)=c_U\neq0.
\]
Thus, the Jacobian block corresponding to \(U_i\) is the
\(1\times1\) matrix
\(
[c_U].
\)

Now suppose that \(n_i\geq2\). For \(u,v\in U_i\), we have
\(\psi(v)=v\). Therefore,
\[
\left.
\frac{\partial}{\partial h_v}
\det A_U(\mathbf h,\epsilon)(u)
\right|_{(\mathbf h^0,0)}
=
\begin{cases}
0, & u=v,\\
c_U, & u\neq v.
\end{cases}
\]
Indeed, when \(u\neq v\), deleting the two leaves \(u\) and \(v\)
from \(S_i\) gives a block of determinant \(1\), while every other
block contributes its determinant.

If \(u\in U_i\) and \(v\in U_j\) with \(i\neq j\), then
\(
\det A_U^0(u,\psi(v))=0.
\)
To see this, observe that deleting \(u\) makes the block \(S_i\)
singular, and deleting \(\psi(v)\), which belongs to a different
block, does not remove this singularity.

Consequently, after ordering the vertices of \(U\) according to the
partition
\(
U=U_1\mathbin{\dot\cup}\cdots\mathbin{\dot\cup}U_q,
\)
the Jacobian is a block diagonal matrix given by,
\[
D_{\mathbf h}F_U(\mathbf h^0,0)
=
\operatorname{diag}(J_1,\ldots,J_q),
\]
where
\[
J_i=
\begin{cases}
[c_U], & n_i=1,\\[2mm]
c_U\bigl(\mathbf J_{n_i}-I_{n_i}\bigr), & n_i\geq2,
\end{cases}
\]
and \(\mathbf J_{n_i}\) denotes the \(n_i\times n_i\) all-one
matrix.

For \(n_i\geq2\), the eigenvalues of
\(\mathbf J_{n_i}-I_{n_i}\) are
\(
n_i-1
\text{ and } 
-1
\)
with multiplicities \(1\) and \(n_i-1\), respectively. Hence every
\(J_i\) is non-singular. Therefore,
\(
D_{\mathbf h}F_U(\mathbf h^0,0)
\)
is non-singular.

By the Implicit Function Theorem, there exist \(\delta_U>0\) and a unique
continuously differentiable function
\(
\varphi_U:(-\delta_U,\delta_U)\longrightarrow\mathbb R^{|U|}
\)
such that
\(
\varphi_U(0)=\mathbf h^0
\)
and
\(
F_U(\varphi_U(\epsilon),\epsilon)=\mathbf 0
\)
for every \(\epsilon\in(-\delta_U,\delta_U)\).
Since
\(\det A_U(\mathbf h^0,0)=\det A_U^0\neq0,
\)
we may decrease \(\delta_U\), if necessary, so that
\(
\det A_U(\varphi_U(\epsilon),\epsilon)\neq0
\)
for every \(\epsilon\in(-\delta_U,\delta_U)\).
Choose
\(
\epsilon_U\in(-\delta_U,\delta_U)\setminus\{0\}
\)
and set
\(
\widetilde{\mathcal{A}}_U
=
A_U(\varphi_U(\epsilon_U),\epsilon_U).
\)
By construction,
\(
\widetilde{\mathcal{A}}_U\in S(G)
\text{ and }
\det\widetilde{\mathcal{A}}_U\neq0.
\)
Furthermore, for every \(u\in U\),
\(
\det\widetilde{\mathcal{A}}_U(u)=0.
\)

Since the above construction is carried out with respect to a temporary ordering of the vertices, the matrix obtained need not correspond to the original labeling of \(G\) used in Step 1. Let \(P\) be the permutation matrix that restores the original ordering in Step 1 and define
\(
\mathcal{A}_U=P^{T}\widetilde{\mathcal{A}}_U P.
\)
Since simultaneous permutation of rows and columns preserves symmetry, non-singularity, principal minors, and the graph of a matrix, it follows that \(\mathcal{A}_U\in S(G)\), \(\det(\mathcal{A}_U)\neq0\), and \(\det(\mathcal{A}_U(u))=0\) for every \(u\in U\).
As in Step 1, Cauchy interlacing theorem implies that
\(
m_{\widetilde{\mathcal{A}}_U(u)}(0)=1
\)
for every \(u\in U\). Thus, every vertex of \(U\) is a \(P\)-vertex
of \(\mathcal{A}_U\).

The matrix \(\mathcal A_W\) covers all vertices of \(W\) makes all the vertices of $W$ as $P$-vertices and the
matrix \(\mathcal A_U\) covers all vertices of \(U\) makes all the vertices of $U$ as $P$-vertices. Therefore,
\(
p(G)\leq2.
\) 

\end{proof}

\begin{rem}
    The proof of the above theorem still works if we choose a matching which saturates a dominating set, that is, every unsaturated vertex is adjacent to atleast one of the saturated vertex. 
\end{rem}


The following corollary is an immediate consequence of the proof of Theorem~\ref{thm:main}. It extends the corresponding result of Sharma and Panda \cite[Theorem 3.1]{sharma} from bipartite graphs to arbitrary graphs.

\begin{cor}
If a graph \(G\) has a perfect matching, then
\(
p(G)=1.
\)
\end{cor}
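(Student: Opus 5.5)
The plan is to specialize Step 1 of the proof of Theorem~\ref{thm:main} to the case where the maximal matching is perfect. Let \(M\) be a perfect matching of \(G\). A perfect matching cannot be enlarged, so it is in particular a maximal matching. Take it as the maximal matching in the proof of Theorem~\ref{thm:main}; then the saturated set is \(W=V(G)\) and the unsaturated set is \(U=\varnothing\).

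Next, I run Step 1 verbatim.
\begin{itemize}
\item The initial matrix is \(A_W^0=\operatorname{diag}(B,\ldots,B)\), with \(t=n/2\) copies of \(B\) and no identity block. It is non-singular.
\item For each \(v\in V(G)\) the minor \(\det A_W^0(v)\) vanishes, because deleting \(v\) leaves a \(1\times 1\) zero block from its matching edge.
\item The Jacobian of \(F_W\) with respect to the diagonal variables \(\mathbf d\) is block diagonal with non-singular \(2\times2\) blocks \(\begin{bmatrix}0&c_W\\ c_W&0\end{bmatrix}\).
\item The Implicit Function Theorem then gives, for small \(\epsilon_W\neq0\), a matrix \(\mathcal A_W\in S(G)\). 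Its matching edges carry \(1\), its other edges carry \(\epsilon_W\), and its non-edges carry \(0\). It satisfies \(\det\mathcal A_W\neq0\) and \(\det\mathcal A_W(v)=0\) for every vertex \(v\).
\end{itemize}

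Cauchy interlacing then gives \(m_{\mathcal A_W(v)}(0)-m_{\mathcal A_W}(0)=1\) for all \(v\). Hence every vertex is a \(P\)-vertex of the single non-singular matrix \(\mathcal A_W\). So \(G\) has property \((P)\), and \(p(G)\le 1\).

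For the reverse inequality, note that \(p(G)\ge1\) always holds when \(G\) has vertices. An empty family covers no vertex. If \(G\) is the null graph, the statement is vacuous or degenerate, and I would simply assume \(n\ge 1\).

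I also need \(G\) to have no isolated vertices, so that Theorem~\ref{thm:main} applies. This is automatic, since every vertex is matched. Strictly, though, only Step 1 is used, and Step 1 does not need that hypothesis.

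The only point I expect to require care is confirming that Step 1 goes through unchanged when \(U=\varnothing\). The identity block \(I_{|U|}\) is empty, and nothing in the Jacobian computation or in the determinant \((-1)^{t-1}\) of \(A_W^0(v,z)\) depends on \(U\). I would check this explicitly and then conclude \(p(G)=1\), as already noted at the end of Step 1.
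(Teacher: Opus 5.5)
Your proposal is correct and follows the paper's proof: take the perfect matching as the maximal matching, so that \(W=V(G)\) and \(U=\varnothing\), and Step 1 alone produces a non-singular \(\mathcal A_W\in S(G)\) for which every vertex is a \(P\)-vertex. Your extra remarks are accurate points the paper leaves implicit: \(p(G)\ge 1\) needs \(n\ge 1\), Step 1 does not use the no-isolated-vertices hypothesis, and the case \(t=1\) (where \(\det A_W^0(v,z)\) is an empty determinant) is harmless.
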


\begin{proof}
Let \(M\) be a perfect matching of \(G\). Then every vertex of \(G\)
is saturated by \(M\), so in the notation of the preceding proof,
\(
W=V(G)
\text{ and }
U=\varnothing.
\)
The construction in Step 1 therefore produces a non-singular matrix
\(\mathcal A_W\in S(G)\) for which every vertex of \(G\) is a
\(P\)-vertex. Hence \(
p(G)=1
\) and consequently \(G\) has property \((P)\).

\end{proof}

Although the proof of Theorem~\ref{thm:main} is constructive, the resulting implicit function is generally difficult to compute explicitly. The following example illustrates the construction and shows that, in special cases, the implicit function can be determined exactly.
\section{Examples}
\begin{example}
\label{ex:1}
Consider the graph \(G\) shown in Figure~\ref{fig:1}.

\begin{figure}[H]
    \centering
    \begin{tikzpicture}
        \node[circle,draw,fill=black,inner sep=1.5pt,label=above:\(1\)]
            (1) at (0,0) {};
        \node[circle,draw,fill=black,inner sep=1.5pt,label=above:\(2\)]
            (2) at (2,0) {};
        \node[circle,draw,fill=black,inner sep=1.5pt,label=above:\(5\)]
            (5) at (4,0) {};
        \node[circle,draw,fill=black,inner sep=1.5pt,label=below:\(4\)]
            (4) at (0,-2) {};
        \node[circle,draw,fill=black,inner sep=1.5pt,label=below:\(3\)]
            (3) at (2,-2) {};
        \node[circle,draw,fill=black,inner sep=1.5pt,label=below:\(6\)]
            (6) at (4,-2) {};

        \draw (1)--(2)--(5);
        \draw (4)--(3)--(6);
        \draw (1)--(4);
        \draw (2)--(3);
        \draw (5)--(6);
    \end{tikzpicture}
    \caption{The graph in Example~\ref{ex:1}.}
    \label{fig:1}
\end{figure}

Consider the perfect matching
\(
M=\bigl\{\{1,2\},\{3,4\},\{5,6\}\bigr\}.
\)
The corresponding initial matrix is
\[
A_0=
\begin{bmatrix}
0&1&0&0&0&0\\
1&0&0&0&0&0\\
0&0&0&1&0&0\\
0&0&1&0&0&0\\
0&0&0&0&0&1\\
0&0&0&0&1&0
\end{bmatrix}.
\]

Let
\(
\mathbf d=(d_1,\ldots,d_6)\in\mathbb R^6 \text{ and } \epsilon \in \R
\)
and define 
\[
A(\mathbf d,\epsilon):=
\begin{bmatrix}
d_1&1&0&\epsilon&0&0\\
1&d_2&\epsilon&0&\epsilon&0\\
0&\epsilon&d_3&1&0&\epsilon\\
\epsilon&0&1&d_4&0&0\\
0&\epsilon&0&0&d_5&1\\
0&0&\epsilon&0&1&d_6
\end{bmatrix}.
\]
Then
\(
A(\mathbf 0,0)=A_0.
\)

Define
\(
F:\mathbb R^6\times\mathbb R\longrightarrow\mathbb R^6
\)
by
\(
F(\mathbf d,\epsilon)
=
\bigl(
\det A(\mathbf d,\epsilon)(1),\ldots,
\det A(\mathbf d,\epsilon)(6)
\bigr).
\)
Clearly,
\(
F(\mathbf 0,0)=\mathbf 0.
\)

The Jacobian of \(F\) with respect to \(\mathbf d\), evaluated at
\((\mathbf 0,0)\), is
\[
D_{\mathbf d}F(\mathbf 0,0)
=
\begin{bmatrix}
0&1&0&0&0&0\\
1&0&0&0&0&0\\
0&0&0&1&0&0\\
0&0&1&0&0&0\\
0&0&0&0&0&1\\
0&0&0&0&1&0
\end{bmatrix}
=A_0.
\]
Since the Jacobian is non-singular, the Implicit Function Theorem yields a
neighborhood \(I\) of \(0\) and a unique continuously differentiable
function
\(
\varphi:I\longrightarrow\mathbb R^6
\)
such that
\(
\varphi(0)=\mathbf 0
\)
and
\(
F(\varphi(\epsilon),\epsilon)=\mathbf 0
\)
for every \(\epsilon\in I\).
In fact, the implicit function can be identified directly. 

The matrix
\(A(\mathbf 0,\epsilon)\) is a weighted bipartite adjacency matrix with
bi-partition
\(
\{1,3,5\}\mathbin{\dot\cup}\{2,4,6\}.
\)
After deleting any vertex, the two parts have cardinalities \(2\) and
\(3\). Consequently, for each \(i\in\{1,\ldots,6\}\), the matrix
\(A(\mathbf 0,\epsilon)(i)\) is permutation-similar to a matrix of the
form
\(
\begin{bmatrix}
0&B\\
B^T&0
\end{bmatrix},
\)
where \(B\) is either a \(2\times 3\) or a \(3\times2\) rectangular matrix. Accordingly,
\(
\operatorname{rank}\!\left(A(\mathbf 0,\epsilon)(i)\right)
=
2\operatorname{rank}(B)
\leq
2\min\{2,3\}
=
4
<
5
\)
and therefore \(A(\mathbf 0,\epsilon)(i)\) is singular. Hence,
\(
\det\!\left(A(\mathbf 0,\epsilon)(i)\right)=0
\)
for every \(i\in\{1,\ldots,6\}\). Thus,
\(
F(\mathbf 0,\epsilon)=\mathbf 0
\)
for every \(\epsilon\in\mathbb R\).

Hence the implicit function \(\varphi\) is zero function. Indeed,
\(
\varphi(0)=\mathbf 0
\)
and
\(
F(\varphi(\epsilon),\epsilon)=\mathbf 0
\)
for every \(\epsilon\in\mathbb R\). By the local uniqueness furnished by
the Implicit Function Theorem, this is precisely the implicit function.

A direct computation gives
\(
\det A(\mathbf 0,\epsilon)
=
-\bigl(\epsilon^3-\epsilon^2+1\bigr)^2.
\)
Taking \(\epsilon=1\), we obtain
\(
\det A(\mathbf 0,1)=-1\neq0.
\)
Thus, the matrix
\[
\mathcal A=A(\mathbf 0,1)
=
\begin{bmatrix}
0&1&0&1&0&0\\
1&0&1&0&1&0\\
0&1&0&1&0&1\\
1&0&1&0&0&0\\
0&1&0&0&0&1\\
0&0&1&0&1&0
\end{bmatrix}
\]
is non-singular and belongs to \(S(G)\). Moreover,
\(
\det\mathcal A(i)=0
\)
for every \(i\in V(G)\). Since \(\mathcal A\) is non-singular, the
Cauchy interlacing theorem implies that
\(
m_{\mathcal A(i)}(0)=1
\)
for every \(i\in V(G)\). Hence, every vertex of \(G\) is a
\(P\)-vertex of \(\mathcal A\).
Therefore, \(G\) has property \((P)\). 
\end{example}
\begin{rem}
   The preceding example illustrates the constructive nature of our approach. In this case, the implicit function is identically zero and hence no perturbation of the diagonal entries is required. Since we have chosen \(\epsilon=1\), the matrix produced by our construction is precisely the adjacency matrix of \(G\). More generally, choosing any nonzero value of \(\epsilon\) for which the resulting matrix is non-singular yields a weighted adjacency matrix witnessing property \((P)\). We also remark that the matrix obtained here differs from the one constructed by Sharma and Panda for the same graph, illustrating that a graph may admit several distinct matrices witnessing property \((P)\). 
\end{rem}
\begin{example}\label{ex:2}
We illustrate the construction in the proof of Theorem~\ref{thm:main}
using the graph shown in Figure~\ref{fig:2}.

\begin{figure}[H]
    \centering
    \begin{tikzpicture}
        \node[circle,draw,fill=black,inner sep=1.5pt,label=below:\(1\)]
            (1) at (0,0) {};
        \node[circle,draw,fill=black,inner sep=1.5pt,label=below:\(2\)]
            (2) at (1.5,0) {};
        \node[circle,draw,fill=black,inner sep=1.5pt,label=below:\(3\)]
            (3) at (3,-0.75) {};
        \node[circle,draw,fill=black,inner sep=1.5pt,label=below:\(4\)]
            (4) at (4.5,-0.75) {};
        \node[circle,draw,fill=black,inner sep=1.5pt,label=above:\(5\)]
            (5) at (3,0.75) {};

        \draw (1)--(2)--(3)--(4);
        \draw (2)--(5);
        \draw (5)--(3);
    \end{tikzpicture}
    \caption{The graph in Example~\ref{ex:2}.}
    \label{fig:2}
\end{figure}

Let
\(
M=\bigl\{\{1,2\},\{3,4\}\bigr\}
\)
be a maximal matching of \(G\). Then
\(
W=\{1,2,3,4\}
\)
is the set of vertices saturated by \(M\) and
\(
U=\{5\}
\)
is the set of unsaturated vertices.

We first apply the construction from the proof of
Theorem~\ref{thm:main} to obtain a matrix for which every vertex in
\(W\) is a \(P\)-vertex.

Let
\(
\mathbf d=(d_1,d_2,d_3,d_4)\in\mathbb R^4
\)
and let \(\epsilon\in\mathbb R\). Consider the matrix,
\[
\overline A_W=A_W(\mathbf d,\epsilon)
=
\begin{bmatrix}
d_1&1&0&0&0\\
1&d_2&\epsilon&0&\epsilon\\
0&\epsilon&d_3&1&\epsilon\\
0&0&1&d_4&0\\
0&\epsilon&\epsilon&0&1
\end{bmatrix}.
\]

A direct computation gives
\[
\det(\overline A_W)
=
1-d_1d_2-d_3d_4+d_1d_2d_3d_4
+\epsilon^2
\bigl(
d_1+d_4-d_1d_4-d_1d_2d_4-d_1d_3d_4
\bigr)
+2\epsilon^3d_1d_4.
\]

Define
\(
F_W:\mathbb R^4\times\mathbb R\longrightarrow\mathbb R^4
\)
by
\(
F_W(\mathbf d,\epsilon)
=
\bigl(
\det(\overline A_W(1)),
\ldots,
\det(\overline A_W(4))
\bigr).
\)\\
Clearly,
\(
F_W(\mathbf 0,0)=\mathbf 0.
\)

Moreover, the Jacobian of \(F_W\) with respect to \(\mathbf d\), evaluated
at \((\mathbf 0,0)\), is
\[
D_{\mathbf d}F_W(\mathbf 0,0)
=
\begin{bmatrix}
0&-1&0&0\\
-1&0&0&0\\
0&0&0&-1\\
0&0&-1&0
\end{bmatrix},
\]
which is non-singular.

Hence, by the Implicit Function Theorem, there exist a neighbourhood
\(I\) of \(0\) and a unique continuously differentiable function
\(
\varphi:I\longrightarrow\mathbb R^4
\)
such that
\(
\varphi(0)=\mathbf 0
\)
and
\(
F_W(\varphi(\epsilon),\epsilon)=\mathbf 0
\)
for every \(\epsilon\in I\). In this example, the implicit function can
be determined explicitly.

The relevant principal minors are
\begin{align}
\det(\overline A_W(2))
&=
d_1\bigl[d_4(d_3-\epsilon^2)-1\bigr],
\label{eq:ex3-f2}
\\
\det(\overline A_W(3))
&=
d_4\bigl[d_1(d_2-\epsilon^2)-1\bigr].
\label{eq:ex3-f3}
\end{align}

Define
\(
g:\mathbb R^4\times\mathbb R\longrightarrow\mathbb R
\)
by
\(
g(\mathbf d,\epsilon)
=
d_4(d_3-\epsilon^2)-1.
\)
Then
\(
\det(\overline A_W(2))=d_1\cdot g(\mathbf d,\epsilon),
\)
and
\(
g(\mathbf 0,0)=-1\neq0.
\)

Since \(g\) is continuous, there exists a neighbourhood \(N\) of
\((\mathbf 0,0)\) such that
\(\
g(\mathbf d,\epsilon)\neq0
\)
for every \((\mathbf d,\epsilon)\in N\). Consequently, in order that
\(
F_W(\mathbf d,\epsilon)=\mathbf 0,
\)
it is necessary that
\(
d_1=0.
\)

An identical argument applied to \eqref{eq:ex3-f3} shows that
\(
d_4=0.
\)

Substituting \(d_1=d_4=0\) 
in \(\overline{A}_W\), we get
\[
\det(\overline A_W(1))
=
-d_2+\epsilon^2, \qquad
\det(\overline A_W(4))
=
-d_3+\epsilon^2.
\]
Therefore,
\(
d_2=d_3=\epsilon^2
\) since we need \(F_W(\mathbf{d},\epsilon)=\mathbf{0}\).

Hence the implicit function is given by
\(
\varphi(\epsilon)
=
(0,\epsilon^2,\epsilon^2,0).
\)

Indeed,
\(
\varphi(0)=\mathbf 0
\)
and
\(
F_W(\varphi(\epsilon),\epsilon)=\mathbf 0
\)
for every \(\epsilon\in\mathbb R\). By the local uniqueness furnished by
the Implicit Function Theorem, this is precisely the implicit function
obtained above.

Substituting
\(
\mathbf d=\varphi(\epsilon)
\)
into \(\overline A_W\), we obtain
\(
\det A_W(\varphi(\epsilon),\epsilon)=1,
\)
independently of \(\epsilon\). Consequently, every nonzero choice of
\(\epsilon\) produces a non-singular matrix in \(S(G)\) for which every
vertex in \(W\) is a \(P\)-vertex. 

Choosing
\(
\epsilon=1,
\)
we obtain
\[
\mathcal A_W
=
\begin{bmatrix}
0&1&0&0&0\\
1&1&1&0&1\\
0&1&1&1&1\\
0&0&1&0&0\\
0&1&1&0&1
\end{bmatrix}.
\]

We now apply the second part of the construction to obtain a matrix for
which the vertex in \(U\) is a \(P\)-vertex.

Let \(f(5)=3\), hence \(U_3=\{5\}\) and let \(h,\epsilon\in\mathbb R\). Consider the matrix,
\[
\overline A_U=A_U(h,\epsilon)
=
\begin{bmatrix}
1&\epsilon&0&0&0\\
\epsilon&1&\epsilon&0&\epsilon\\
0&\epsilon&h&\epsilon&1\\
0&0&\epsilon&1&0\\
0&\epsilon&1&0&1
\end{bmatrix}.
\]

A straightforward computation yields
\begin{align}
\det(\overline A_U)
&=
-1+h(1-2\epsilon^2)+\epsilon^2+2\epsilon^4,
\label{eq:ex3-detAU}
\\
\det(\overline A_U(5))
&=
h(1-\epsilon^2)+\epsilon^4-2\epsilon^2.
\label{eq:ex3-detAU5}
\end{align}

Define
\(
F_U:\mathbb R\times\mathbb R\longrightarrow\mathbb R
\)
by
\(
F_U(h,\epsilon)
=
\det(\overline A_U(5)).
\)
Then
\(
F_U(0,0)=0
\)
and
\(
\left.
\frac{\partial F_U}{\partial h}
\right|_{(0,0)}
=1.
\)

Thus, the Implicit Function Theorem yields a unique continuously
differentiable function defined in a neighbourhood of \(0\) and
satisfying the required implicit equation.

From \eqref{eq:ex3-detAU5}, the equation
\(
F_U(h,\epsilon)=0
\)
is equivalent to
\[
h
=
\frac{2\epsilon^2-\epsilon^4}{1-\epsilon^2},
\]
provided that
\(
\epsilon\neq\pm1.
\)

Accordingly, define
\(
\psi:(-1,1)\longrightarrow\mathbb R
\)
by
\(
\psi(\epsilon)
=
\frac{2\epsilon^2-\epsilon^4}{1-\epsilon^2}.
\)
Then
\(
\psi(0)=0
\)
and
\(
F_U(\psi(\epsilon),\epsilon)=0
\)
for every \(\epsilon\in(-1,1)\). Hence \(\psi\) is the unique implicit
function obtained from the Implicit Function Theorem.

Substituting \(h=\psi(\epsilon)\) into \eqref{eq:ex3-detAU} gives
\[
\det A_U(\psi(\epsilon),\epsilon)
=
-\frac{(1-2\epsilon^2)^2}{1-\epsilon^2}.
\]

Therefore, the resulting matrix is non-singular whenever
\(
\epsilon\neq\pm\frac{1}{\sqrt2}
\)
and
\(
\epsilon\neq\pm1.
\)

Choosing
\(
\epsilon=\frac12,
\)
we obtain
\(
\psi\left(\frac12\right)=\frac7{12},
\)
and hence
\[
\mathcal A_U
=
\begin{bmatrix}
1&\frac12&0&0&0\\
\frac12&1&\frac12&0&\frac12\\
0&\frac12&\frac7{12}&\frac12&1\\
0&0&\frac12&1&0\\
0&\frac12&1&0&1
\end{bmatrix}.
\]

Thus, \(\mathcal A_W\) makes the vertices in \(W\) as \(P\)-vertices, while
\(\mathcal A_U\) makes the vertex in \(U\) as \(p\)-vertex. Therefore,
\(
p(G)\leq2.
\)

\end{example}
\begin{rem}
    This example illustrates both the constructions appearing in the proof of
Theorem~\ref{thm:main}. Unlike Example~\ref{ex:1}, the implicit
functions obtained here are nontrivial and the resulting matrices are
weighted matrices rather than the adjacency matrix of \(G\). Different
admissible choices of \(\epsilon\) produce different matrices realizing
the required \(P\)-vertex covering.
\end{rem}

\section*{Declaration of competing interest}
There is no competing interest.

\section*{Acknowledgements}
The first author's research was supported by the ANRF MATRICS Grant (File No. ANRF/ARGM/2025/002777/MTR) and by the Indian Institute of Technology Madras under Grant No. RG26271235MARGFX009003. The second author acknowledges financial support from the Indian Institute of Technology Madras for carrying out this research.

\end{document}